\theoremstyle{definition}
\newtheorem{definition}{Definition}[section]
\newtheorem{theorem}{Theorem}[section]
\newtheorem{remark}{Remark}[section]
\newtheorem{lemma}{Lemma}[section]
\newcommand{\cmark}{\text{\ding{51}}}%
\newcommand{\xmark}{\text{\ding{55}}}
\newcommand{\tp}{^{\mathrm{T}}}
\newcommand{\iv}{^{-1}}
\newcommand{\halfquad}{\mkern9mu}
\newcommand{\norm}[1]{\left\lVert#1\right\rVert}
\DeclarePairedDelimiter\ceil{\lceil}{\rceil}
\DeclarePairedDelimiter\floor{\lfloor}{\rfloor}
\let\NAT@parse\undefined
\newcommand{\edit}[1]{\textcolor{black}{#1}}
\begin{document}

\title{Polynomial and Parallelizable Preconditioning \\ for Block Tridiagonal Positive Definite Matrices \\
\thanks{This project is supported by the European Union’s 2020 Research and Innovation Programme (Marie Skłodowska-Curie Grant 953348 ELO-X) and the United States National Science Foundation (Awards 2246022, 2411369). Any opinions, findings, conclusions, or recommendations expressed in this material are those of the authors and do not necessarily reflect those of the funding organizations. Corresponding author contact: {\tt\footnotesize shaohui.yang@epfl.ch}}
}

\author{Shaohui Yang$^{1}$, Toshiyuki Ohtsuka$^{2}$, Brian Plancher$^{3}$, and Colin N. Jones$^{1}$
\thanks{$^{1}$Automatic Control Laboratory, EPFL, Lausanne, Switzerland.} 
\thanks{$^{2}$Department of Informatics, Kyoto University, Kyoto, Japan.}
\thanks{$^{3}$Barnard College, Columbia University, New York, USA.}
}

\maketitle

\begin{abstract}
The efficient solution of moderately large-scale linear systems arising from the KKT conditions in optimal control problems (OCPs) is a critical challenge in robotics. 
With the stagnation of Moore’s law, there is growing interest in leveraging GPU-accelerated iterative methods, and corresponding parallel preconditioners, to overcome these computational challenges. 
To improve the performance of such solvers, we introduce a parallel-friendly, parametrized multi-splitting polynomial preconditioner framework.
\edit{We first construct and prove the optimal parametrization theoretically in terms of the least amount of distinct eigenvalues and the narrowest spectrum range.
We then compare the theoretical time complexity of solving the linear system directly or iteratively. 
We finally show through numerical experiments how much the preconditioning improves the convergence of OCP linear systems solves.
}  
\end{abstract}

\section{Introduction}
The efficient solution of moderately large-scale linear systems arising from the Karush-Kuhn-Tucker (KKT) conditions in optimal control problems (OCPs) is a fundamental challenge in model predictive control (MPC) and trajectory optimization~\cite{betts1998survey}. These problems are central to enabling real-time, high-performance robotic behaviors in tasks ranging from locomotion to manipulation~\cite{hogan2018reactive,grandia2023perceptive}. \edit{While these systems are typically characterized by a block tridiagonal positive definite matrix~\cite{adabag2024mpcgpu,bu2024symmetric}, direct factorization is computationally prohibitive for large problem instances.} This necessitates the development of custom solvers optimized for scalable efficiency on their target computational platforms~\cite{adabag2024mpcgpu,nguyen2024tinympc}.

At the same \edit{time}, as Moore’s law slows, traditional CPU performance scaling has stagnated~\cite{esmaeilzadeh2011dark}, \edit{increasing interest in algorithms amenable to acceleration on parallel computational hardware (e.g, GPUs). This has led to increased use of iterative methods like the Preconditioned Conjugate Gradient (PCG)~\cite{eisenstat1981efficient,helfenstein2012parallel,schubiger2020gpu,adabag2024mpcgpu}, whose performance is dependent on the quality of preconditioners~\cite{saad2003iterative}.}
Unfortunately, many popular preconditioners place limitations on the underlying matrix structure \edit{(e.g., non-negativity of off-diagonal entries~\cite{yun1998block}, diagonal dominance~\cite{concus1985block}, Toeplitz structure~\cite{di1993cg}),
or are not inherently parallel-friendly (e.g., block incomplete factorization~\cite{yun1998block,concus1985block}, SDP-based preconditioners~\cite{qu2024optimal}), limiting their applicability for parallel computation of OCP KKT systems.}

To address these challenges, we propose a novel, \edit{parallel-friendly}, polynomial preconditioner tailored for symmetric positive definite block tridiagonal matrices. 
\edit{Our key contributions are:} 
(1) Development of a parametrized family of polynomial preconditioners and rigorous analysis of their \edit{spectrum and positive definiteness; 
(2) Optimal parameters that reduce the eigenvalue multiplicity by 50\% and produce the most compact spectrum possible for the preconditioned system; 
(3) Comparison of time complexity between Cholesky factorization and PCG;  
and (4) Numerical validation indicating reduced PCG iteration and matrix-vector multiplication counts versus the state-of-the-art preconditioners.}

\section{Preliminaries}
\subsection{Symmetric positive definite block tridiagonal matrix}
We focus on block tridiagonal matrices\footnote{\textit{Notation:} Symmetric positive definite matrices will be abbreviated as s.p.d.. 
A symmetric matrix with positive eigenvalues is described as p.d..
A non-symmetric matrix is p.d. if its symmetric part--$sym(A) \coloneq \frac{1}{2}(A + A\tp)$--is p.d..  
$S^{n}_{++}$ denotes the set of s.p.d. matrices of size $n \times n$. 
$\rho(A)$ denotes the spectral radius and $\sigma(A)$ denotes the spectrum of $A$. 
If $A$ is symmetric, then $\mathcal{I}_{A}$ denotes the minimal interval that contains all its real eigenvalues.} $A \in S^{Nn }_{++}$, where $N$ denotes the number of diagonal blocks and $n$ denotes the block size. \edit{A minimal example of $N=3$ is used across the majority of the paper: } 
\begin{equation}\label{eq:def-spd-block-tri}
    A = \begin{bmatrix}
        D_1 & O_1 & \\
        O_1\tp & D_2 & O_2 \\
        & O_2\tp & D_3
    \end{bmatrix}, \quad D_k, O_k \in \mathbb{R}^{n \times n}.
\end{equation}
An immediate result of $A \in S^{Nn }_{++}$ is $D_k \in S_{++}^{n}, \forall k$. No assumption is placed towards sparsity of $D_k, O_k$, i.e., they are viewed as general dense blocks with $n^2$ entries each. 



\subsection{Matrix splittings and polynomial preconditioning}

Many preconditioners arise from matrix splittings. In this section, we \edit{first} review the definitions of \edit{{\it convergent} splitting}, {\it P-regular} splitting, and {\it multi-}splitting: 
\begin{definition}\cite{ortega1990numerical}\label{def:convergent-splitting}
    Let $A, B, C \in \mathbb{R}^{n \times n}$. $A = B- C$ is a \textit{convergent} splitting if $B$ is nonsingular and $\rho(B\iv C) < 1$. 
\end{definition}
\begin{definition}\cite{ortega1990numerical}\label{def:p-regular-splitting}
    Let $A, B, C \in \mathbb{R}^{n \times n}$. $A = B- C$ is a \textit{P-regular} splitting of $A$ if $B$ is nonsingular and $B+C$ is p.d..  
\end{definition}

\begin{definition}\cite{o1985multi}\label{def:multi-splitting}
    Let $A, B_k, C_k, \edit{W_k} \in \mathbb{R}^{n \times n}$. If   
    \begin{itemize}
        \item $A = B_k - C_k, k=1, \dots, K$ with each $B_k$ invertible and
        \item $\sum_{k=1}^K \edit{W_k} = I$ with \edit{diagonal weight matrix $W_k$},  
    \end{itemize}
    then $(B_k, C_k, \edit{W_k})$ is called a \textit{multi-}splitting of $A$. 
\end{definition}

Given a multi-splitting, \edit{a matrix pair $(G,H)$ is constructed where $H \coloneqq \sum_{k} W_k B_k\iv C_k, G \coloneqq \sum_{k} W_k B_k\iv$ and manipulated as a new} single splitting $A = B - C$ \edit{where $B = G\iv, C = G\iv H$. It is preferable to express the reverse:}
\begin{align}\label{eq:def-GH-from-BC}
    G &= B\iv, & H &= B\iv C. 
\end{align}

\edit{P-regular splittings are convergent~\cite{ortega1990numerical}. If a multi-splitting is weighted nonnegatively by a set of P-regular splittings, then the equivalent single splitting is convergent~\cite{o1985multi}. Convergence of splitting is necessary in the definition of Neumann series and $m$-step polynomial preconditioner presented below.}

\begin{lemma}[Neumann series]\cite{neumann1877untersuchungen}
\label{lem:Neumann-series}
    \edit{If $A = B- C$ is a convergent splitting of the nonsingular matrix $A$}, then with~\eqref{eq:def-GH-from-BC}, 
    \begin{equation}
        \lim_{k \rightarrow\infty} \Big( I + H + H^2 + \dots + H^k \Big) G = A\iv.         
    \end{equation}
\end{lemma}

\begin{definition}[Truncated Neumann series]\cite{adams1985m}
    Under the same conditions as~\Cref{lem:Neumann-series}, to approximate \edit{$A\iv$}, the \edit{inverse} of $m$-step polynomial preconditioner is defined as:
    \begin{equation}\label{eq:def-m-step-polynomial-preconditioner-inverse}
        M_m\iv \coloneq (I + H + H^2 + \dots + H^{m-1}) G \approx A\iv. 
    \end{equation}
\end{definition}

\edit{Though $M_m$ is the polynomial preconditioner, $M_m\iv$ is preferred by PCG because it avoids the expensive back solve.} While convergence of the splitting is \edit{necessary}, it is insufficient for $M_m$ to be a s.p.d. preconditioner for the CG method. As such, we present \edit{the following lemma} to fill that gap:
\begin{lemma}\cite{adams1985m}
\label{lem:polynomial-preconditioner-spd}
    If $A = B- C$ is s.p.d. and $B$ is symmetric and nonsingular, then with~\eqref{eq:def-GH-from-BC} and~\eqref{eq:def-m-step-polynomial-preconditioner-inverse}, 
    \begin{enumerate}
        \item $M_m$ is symmetric. 
        \item \edit{$\rho(B\iv C) < 1$ if and only if $B + C$ is p.d.. }
        \item For $m$ odd, $M_m$ is p.d. if and only if $B$ is p.d.. 
        \item For $m$ even, $M_m$ is p.d. if and only if $B + C$ is p.d.. 
    \end{enumerate}
\end{lemma}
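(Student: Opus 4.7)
\emph{Plan.} The strategy is to collapse the geometric series $I + H + \cdots + H^{m-1}$ into a single power of a symmetric matrix via a congruence, so that p.d.-ness of $M_m$ reduces to a spectral condition that can be read off immediately.

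For the symmetry claim (1), I would first note that $C = B - A$ is symmetric, and then prove by induction on $j$ the algebraic identity $(B\iv C)^{j} B\iv = B\iv (CB\iv)^{j}$. This lets me rewrite
\[
M_m\iv \;=\; \sum_{j=0}^{m-1} (B\iv C)^{j} B\iv \;=\; \sum_{j=0}^{m-1} B\iv (CB\iv)^{j}.
\]
Transposing term by term swaps the two forms on the right, which are equal by the identity just proven, so each summand equals its own transpose and $M_m\iv$, hence $M_m$, is symmetric.

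For (2) and (3) I would exploit that $A$ is s.p.d.\ and conjugate by $A^{1/2}$. Put $\tilde B \coloneqq A^{-1/2} B A^{-1/2}$ (symmetric and invertible) and $\hat H \coloneqq A^{1/2} H A^{-1/2}$. Because $H = I - B\iv A$, a direct calculation gives $\hat H = I - \tilde B\iv$, which is symmetric, and $I - \hat H = \tilde B\iv$. The telescoping identity $(I + \hat H + \cdots + \hat H^{m-1})(I - \hat H) = I - \hat H^{m}$ then collapses the series:
\[
A^{1/2} M_m\iv A^{1/2} \;=\; (I + \hat H + \cdots + \hat H^{m-1}) \tilde B\iv \;=\; I - \hat H^{m}.
\]
Since this is a congruence by the invertible matrix $A^{1/2}$, $M_m$ is p.d.\ iff $I - \hat H^{m}$ is p.d., which, as $\hat H$ is symmetric, is equivalent to $1 - \lambda^{m} > 0$ for every eigenvalue $\lambda$ of $\hat H$.

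I would finish by a parity case split. Write $\lambda = 1 - \mu$ with $\mu \in \sigma(\tilde B\iv)$. For $m$ odd, $\lambda^{m} < 1$ simplifies to $\lambda < 1$, i.e.\ $\mu > 0$ for every eigenvalue, i.e.\ $\tilde B\iv$ (equivalently $B$, by congruence with $A^{1/2}$) is p.d. For $m$ even, $\lambda^{m} < 1$ becomes $|\lambda| < 1$, i.e.\ $0 < \mu < 2$; rewriting via eigenvalues $\nu = 1/\mu$ of $\tilde B$ this reads $\nu > 1/2$, i.e.\ $2\tilde B - I$, and hence $2B - A = B + C$, is p.d. The subtle point I expect to be the main hurdle is reconciling the ``iff'' in the $m$-even case: the eigenvalue analysis a priori demands \emph{both} $\mu > 0$ and $\mu < 2$, but the lemma lists only p.d.-ness of $B + C$. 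These are equivalent under the standing hypothesis, since the identity $2B = A + (B+C)$ exhibits $B$ as a sum of p.d.\ matrices whenever $B+C$ is p.d., so $\mu > 0$ comes for free and closes the ``if'' direction of item~(3).
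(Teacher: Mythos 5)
The paper does not prove this lemma at all --- it is imported verbatim from \cite{adams1985m} --- so there is no in-paper argument to compare against. Your proof is correct and self-contained: the symmetry argument via $(B\iv C)^{j}B\iv = B\iv(CB\iv)^{j}$ is sound, the congruence by $A^{1/2}$ correctly collapses the series to $I-\hat H^{m}$ with $\hat H = I - \tilde B\iv$ symmetric, and the parity case split (including the translation $0<\mu<2 \Leftrightarrow \nu>\tfrac12 \Leftrightarrow B+C \succ 0$) is handled correctly; the ``subtle point'' you flag at the end is in fact already subsumed by the per-eigenvalue equivalence $\mu\in(0,2)\Leftrightarrow 1/\mu>\tfrac12$, but the extra check via $2B = A + (B+C)$ does no harm. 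Your route is essentially the classical one from Adams, and it is consistent with how the paper itself later exploits the telescoping identity $M_m\iv A = I - H^m$ in \Cref{lem:eigenvalues-Mabinv*A}.
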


\subsection{Splittings of s.p.d. block tridiagonal matrix}
We examine two fundamental splittings of $A$ in~\eqref{eq:def-spd-block-tri}, which serve as the basis for \edit{the upcoming family of} multi-splittings.

\subsubsection{Diagonal splitting}
The diagonal splitting $A = B_d - C_d$ is defined as: 
\begin{equation}\label{eq:def-diagonal-splitting}
B_d \coloneq \begin{bmatrix}
    D_1 & \\
    & D_2 & \\
    & & D_3
\end{bmatrix}, \hspace{0.1em}
C_d \coloneq \begin{bsmallmatrix}
    0 & -O_1 & \\
    -O_1\tp & 0 & -O_2 \\
    & -O_2\tp & 0
\end{bsmallmatrix}. 
\end{equation}
\begin{lemma}
    The diagonal splitting is P-regular.
\end{lemma}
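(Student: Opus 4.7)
The plan is to verify the two conditions of \Cref{def:p-regular-splitting} directly.

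First, I would dispose of the nonsingularity of $B_d$. Since $A \in S_{++}^{Nn}$, every principal block $D_k$ lies in $S_{++}^{n}$ (this is noted right after~\eqref{eq:def-spd-block-tri}). In particular each $D_k$ is invertible, and the block-diagonal matrix $B_d = \mathrm{diag}(D_1, D_2, D_3)$ is therefore nonsingular (and itself s.p.d.).

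The substantive step is to show that $B_d + C_d$ is p.d. I would do this via a congruence transform using a ``block sign-flip'' matrix
\begin{equation*}
    S \coloneqq \mathrm{diag}(I_n,\, -I_n,\, I_n).
\end{equation*}
The key computation is to check block-by-block that
\begin{equation*}
    S\tp A S = \begin{bmatrix} D_1 & -O_1 & \\ -O_1\tp & D_2 & -O_2 \\ & -O_2\tp & D_3 \end{bmatrix} = B_d + C_d,
\end{equation*}
where the off-diagonal sign flips come precisely from the alternating $\pm I_n$ pattern of $S$ (the diagonal blocks are unchanged because $(-I_n) D_2 (-I_n) = D_2$). Since $S$ is symmetric and orthogonal ($S\tp = S$ and $S^2 = I$), it is in particular nonsingular, so $B_d + C_d = S\tp A S$ is congruent to $A \in S^{Nn}_{++}$ and hence is itself s.p.d., establishing that it is p.d.

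Combining both parts yields P-regularity. The only potentially tricky step is spotting the right congruence transform; once $S$ is written down, everything reduces to a routine $3 \times 3$ block multiplication. The same argument generalizes to arbitrary $N$ by taking $S = \mathrm{diag}(I_n, -I_n, I_n, -I_n, \dots)$.
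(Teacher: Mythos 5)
Your proof is correct and is essentially the paper's own argument in matrix form: the paper verifies $x\tp A x = x_e\tp(B_d+C_d)x_e$ for the sign-flipped vector $x_e$, which is exactly your congruence $S\tp A S = B_d + C_d$ with $S = \mathrm{diag}(I_n,-I_n,I_n)$ written as a quadratic-form identity. Both rest on the same alternating sign-flip idea and generalize to arbitrary $N$ identically.
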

\begin{proof}
    $B_d$ is s.p.d. and thus invertible. $B_d + C_d$ is symmetric so it suffices to prove it is p.d.. 
    \edit{Take $N=3$ as an example. }$A$ is s.p.d. $\Rightarrow \forall x = \begin{bsmallmatrix} x_1\tp & x_2\tp & x_3\tp \end{bsmallmatrix}\tp \neq 0, x\tp A x > 0$. 
    Define vectors $x_{e} = \begin{bsmallmatrix} x_1\tp & -x_2\tp & x_3\tp \end{bsmallmatrix}\tp, x_{o} = \begin{bsmallmatrix} -x_1\tp & x_2\tp & -x_3\tp \end{bsmallmatrix}\tp$. 
    By construction, $x\tp A x = x_{e}\tp (B_d + C_d) x_{e} = x_{o}\tp (B_d + C_d) x_{o} > 0$. 
\end{proof}

\subsubsection{Stair splittings}
The left and right stair splittings~\cite{lu1999stair} $A = B_l - C_l = B_r - C_r$ are defined as: 
\begin{equation}\label{eq:def-stair-splittings}
\begin{aligned}
    B_l &\coloneq \begin{bmatrix}
        D_1 &  & \\
        O_1\tp & D_2 & O_2 \\
        &  & D_3 
    \end{bmatrix}, \halfquad 
    C_l \coloneq \begin{bmatrix}
        0 & -O_1 & \\
         & 0 &  \\
        & -O_2\tp & 0
    \end{bmatrix}. \\
    B_r &\coloneq \begin{bmatrix}
        D_1 & O_1 & \\
         & D_2 &  \\
        & O_2\tp & D_3
    \end{bmatrix}, \halfquad 
    C_r \coloneq \begin{bmatrix}
        0 &  & \\
        -O_1\tp & 0 & -O_2 \\
        & & 0
    \end{bmatrix}.
\end{aligned}
\end{equation}
\begin{lemma}
    The left and right stair splittings are P-regular. 
\end{lemma}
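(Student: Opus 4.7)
My plan is to verify the two conditions of \Cref{def:p-regular-splitting}, namely invertibility of the ``$B$'' factor and positive-definiteness of $B+C$, for each of the stair splittings in~\eqref{eq:def-stair-splittings}.

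For invertibility, I would exploit the fact that in $B_l$ the first and third block rows contain only the diagonal blocks $D_1$ and $D_3$. Permuting block rows and columns simultaneously according to the ordering $(1,3,2)$ converts $B_l$ into a block-triangular matrix whose diagonal blocks are $D_1, D_3, D_2$; since each $D_k$ is s.p.d.\ and hence invertible, $B_l$ is invertible. An analogous permutation, now isolating the middle block row and column, handles $B_r$.

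For positive-definiteness of $B_l+C_l$, the key subtlety is that, in contrast to the diagonal splitting, this matrix is \emph{not} symmetric: its $(1,2)$-block is $-O_1$ while its $(2,1)$-block is $O_1\tp$. I therefore invoke the paper's convention that a non-symmetric matrix is p.d.\ iff its symmetric part is, and compute $sym(B_l+C_l)$ blockwise. The $\pm O_1$ and $\pm O_2$ contributions cancel pairwise with their transposes, so
\[
    sym(B_l+C_l)=\mathrm{diag}(D_1,D_2,D_3)=B_d,
\]
which is s.p.d.\ since each $D_k$ is. An identical calculation yields $sym(B_r+C_r)=B_d$, and both stair splittings are thus P-regular.

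The main obstacle is conceptual rather than computational: one must notice that, unlike the diagonal splitting, $B+C$ is non-symmetric here, so the symmetric-matrix definition of p.d.\ does not apply directly and one has to fall back on the symmetric-part definition from the notation footnote. Once that hurdle is cleared, the pairwise cancellation of the off-diagonal blocks delivers the result immediately and uniformly for both $B_l+C_l$ and $B_r+C_r$.
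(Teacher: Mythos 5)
Your proposal is correct and follows essentially the same route as the paper: the paper's proof likewise reduces positive definiteness to the observation $sym(B_l+C_l)=sym(B_r+C_r)=B_d$, which is s.p.d. Your permutation-to-block-triangular argument for invertibility is a welcome, more explicit justification of the paper's terser claim that $B_l, B_r$ are invertible ``because they contain all the diagonal s.p.d.\ blocks.''
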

\begin{proof}
    $B_l, B_r$ are invertible because they contain all the diagonal s.p.d. blocks. $sym(B_l + C_l) = sym(B_r + C_r) = B_d $, which is s.p.d., holds by construction.
\end{proof}

\section{m-step preconditioners from multi-splitting}
In this section, we construct a family of multi-splittings \edit{by weighting} diagonal splitting~\eqref{eq:def-diagonal-splitting} and stair splittings~\eqref{eq:def-stair-splittings} \edit{of $A$ in~\eqref{eq:def-spd-block-tri} parametrically}. 
\edit{Each multi-splitting corresponds to a matrix pair $(G, H)$ that is extendable to $M_m\iv$ in~\eqref{eq:def-m-step-polynomial-preconditioner-inverse}. 
We first analyze how the weightings influence the eigenvalue distribution of $M_m\iv A$. 
We then prove conditions under which $M_m\iv$ qualifies as a preconditioner for CG. 
We conclude with an optimal set of parameters resulting in the most clustered spectrum for faster PCG convergence}. 

\subsection{Parametric multi-splitting family}
Following~\Cref{def:multi-splitting}, a family of multi-splittings is constructed with $(B_k, C_k, \edit{W_k}), k \in \{d,l,r\}, K=3$ and $\edit{W_l = W_r} = aI, \edit{W_d} = bI$ \edit{where the weights $(a, b)$ belong to the set $\mathcal{C} \coloneq \{(a, b) \in \mathbb{R} \mid 2a + b = 1\}$. 
Each multi-splitting corresponds to a parametric matrix pair $(G_{ab}, H_{ab})$: }
\begin{equation}\label{eq:def-GH-ab-multi-splitting}
\begin{aligned}
    G_{ab} &\coloneqq a(B_l\iv + B_r\iv) + bB_d\iv, \\
    H_{ab} &\coloneqq a(B_l\iv C_l + B_r\iv C_r) + bB_d\iv C_d. 
\end{aligned}
\end{equation}

\eqref{eq:def-GH-ab-multi-splitting} can be interpreted as deriving from a single splitting: 
\begin{align}\label{eq:def-ab-multi-splitting}
    A &= B_{ab} - C_{ab}, &
    B_{ab} &\coloneqq G_{ab}\iv, & 
    C_{ab} &\coloneq G_{ab}\iv H_{ab}. 
\end{align}

The inverse of $m$-step preconditioner related to~\eqref{eq:def-ab-multi-splitting} is: 
\begin{equation}\label{eq:Mabinv}
    _{ab}M_m\iv \coloneqq (I + H_{ab} + H_{ab}^2 + \dots + H_{ab}^{m-1}) G_{ab}. 
\end{equation}

\edit{$_{ab}M_m\iv$ is the focus from now on. We will discuss its spectrum and symmetric positive definiteness based on parameters $a, b, m$. An established fact is that if $a, b \geq 0$, then $G_{ab}$ is s.p.d. and $\rho(H_{ab})<1$~\cite{o1985multi}. We will explore beyond that.}

\subsection{Spectrum analysis}
\edit{
In this subsection, we will conclude that the eigenvalues of $_{ab}M_m\iv A$ are functions of that of $B_l\iv C_l$. 
To start with the base case $m=1$, 
the following notations are introduced}: for $v\tp = (v_1\tp, \dots, v_N\tp) \in \mathbb{R}^{Nn}, v_i \in \mathbb{R}^n$, denote $v_e\tp = (0, v_2\tp, \dots, 0, v_{2j}\tp, \dots)$ and $v_o\tp = (v_1\tp, 0, \dots, v_{2j+1}\tp, 0, \dots)$ such that $v = v_e + v_o$. 
$f_{a}(\lambda)$ denotes $f_{a+}(\lambda)$ or $f_{a-}(\lambda)$, where 
\begin{equation}
    \begin{aligned}
        f_{a\pm}(\lambda) &\coloneqq a\lambda \pm (1-a)\sqrt{\lambda}. 
    \end{aligned}
\end{equation}

\begin{lemma}\label{lem:Hab-GabA-eigenpairs}
    If $(\lambda \neq 0, v = v_e + v_o)$ is an eigenpair of $B_l^{-1} C_l$, 
    \begin{enumerate}
        \item \edit{then $(f_{a\pm}(\lambda), v_e \pm \sqrt{\lambda}v_o)$} are eigenpairs of $H_{ab}$. 
        \item \edit{then $(1 - f_{a\pm}(\lambda), v_e \pm \sqrt{\lambda}v_o)$} are eigenpairs of $G_{ab} A$.
    \end{enumerate}
\end{lemma}

\begin{proof}
    By construction, $G_{ab}A + H_{ab} = I, \forall (a, b) \in \mathcal{C}$. 
    So Point 2) follows from Point 1) and vice versa. 
    By inspection, $C_l v_o = C_r v_e = 0$. According to~\cite{bu2024symmetric}, 
    \begin{align}
        B_d\iv C_l v &= B_d\iv C_l v_e = \lambda v_o, \qquad 
        B_d\iv C_r v = B_d\iv C_r v_o = v_e, \notag \\ 
        B_l\iv C_l v &= B_l\iv C_l v_e = \lambda (v_e + v_o), \\ 
        B_r\iv C_r v &= B_r\iv C_r v_o = v_e + \lambda v_o. \notag 
    \end{align}
    A reasonable ansatz of the eigenvector of $H_{ab}$ would be $v_{ab} = \alpha v_e + \beta v_o$. We expand $H_{ab} v_{ab}$ to further validate our guess: 
    \begin{align}\label{eq:Hab-eigenvalue-proof}
        &\hspace{1.2em} H_{ab} v_{ab} = [ a (B_l\iv C_l + B_r\iv C_r) + (1-2a)B_d\iv C_d ] v_{ab} \notag \\
        &= a[(\alpha \lambda + \beta) v_e + (\alpha + \beta) \lambda v_o] + (1-2a) (\beta v_e + \alpha \lambda v_o ) \notag \\
        &= (a \alpha \lambda - a \beta + \beta) v_e + (a \beta \lambda + \alpha \lambda - a \alpha \lambda) v_o \\
        &= \lambda_{ab} v_{ab} = \lambda_{ab} \alpha v_e + \lambda_{ab} \beta v_o. \notag
    \end{align}
    the last parts of~\eqref{eq:Hab-eigenvalue-proof} hold if and only if $\lambda_{ab} = \frac{a \alpha \lambda - a \beta + \beta}{\alpha} = \frac{a \beta \lambda + \alpha \lambda - a \alpha \lambda}{\beta} \Leftrightarrow (1-a) (\alpha^2 \lambda -\beta^2)= 0$. If $a \neq 1$, then $\beta = \pm \sqrt{\lambda} \alpha$ must hold. 
    By setting $\alpha = 1$, Point 1) is proven. If $a = 1$, then $(\alpha, \beta)$ are unrestricted. Hence $v_e, v_o$ are eigenvectors and $f_{a\pm}(\lambda)$ both degenerate to the identity function. 
\end{proof}
%
\edit{\Cref{lem:Hab-GabA-eigenpairs} concludes that $G_{ab} A$ and $H_{ab}$ share the same eigenvectors, a fact that generalizes to $_{ab}M_m\iv A$.}
\begin{lemma}\label{lem:eigenvalues-Mabinv*A}
    If $\lambda \neq 0$ is an eigenvalue of $B_l\iv C_l$, \edit{then} $_{ab}M_m\iv A$ has a pair of eigenvalues at \edit{$1 - f_{a\pm}(\lambda)^m$}. 
\end{lemma}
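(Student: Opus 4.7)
The cleanest route is to first collapse $_{ab}M_m\iv A$ into a single power of $H_{ab}$ via a telescoping identity, after which the eigenvalue claim reduces to~\Cref{lem:a-b-negative-eigenvalues}. Specifically, I will show that
\begin{equation*}
    _{ab}M_m\iv A = I - H_{ab}^m,
\end{equation*}
and then argue that eigenpairs of $H_{ab}$ transfer directly to eigenpairs of $I - H_{ab}^m$ by taking $m$-th powers and shifting by $I$.

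The first step is a one-line calculation. From the identity $G_{ab}A + H_{ab} = I$ (noted just after~\eqref{eq:def-GH-ab-multi-splitting}), we get $G_{ab}A = I - H_{ab}$. Substituting into~\eqref{eq:Mabinv} and recognizing the product of a geometric partial sum and its complementary factor,
\begin{equation*}
    _{ab}M_m\iv A = (I + H_{ab} + H_{ab}^2 + \dots + H_{ab}^{m-1})(I - H_{ab}) = I - H_{ab}^m,
\end{equation*}
where the telescoping is legitimate because both factors are polynomials in $H_{ab}$ and hence commute.

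The second step invokes~\Cref{lem:a-b-negative-eigenvalues}: for a nonzero eigenvalue $\lambda$ of $B_l\iv C_l$ with associated $v = v_e + v_o$, the vectors $v_e \pm \sqrt{\lambda}\,v_o$ are eigenvectors of $H_{ab}$ with eigenvalues $f_{a\pm}(\lambda)$. Since $H_{ab}^m$ acts on these eigenvectors by multiplication by $f_{a\pm}(\lambda)^m$, applying $I - H_{ab}^m = {_{ab}M_m\iv} A$ to the same vectors yields eigenvalues $1 - f_{a\pm}(\lambda)^m$, which is exactly the claim.

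I do not anticipate any real obstacle: the telescoping identity is elementary, and the spectral transfer from $H_{ab}$ to $H_{ab}^m$ and then to $I - H_{ab}^m$ is standard once the commutativity provided by polynomial structure is observed. The only place where care is needed is to keep the left/right ordering of $G_{ab}$ and the polynomial factor straight in~\eqref{eq:Mabinv}, but this is handled automatically by the commutativity just noted.
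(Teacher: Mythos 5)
Your proof is correct and is essentially the paper's own argument: both rest on the identity $G_{ab}A = I - H_{ab}$, the eigenpairs from~\Cref{lem:a-b-negative-eigenvalues}, and the telescoping of the geometric sum; you merely perform the telescoping at the matrix level (obtaining $_{ab}M_m\iv A = I - H_{ab}^m$ first) whereas the paper applies the eigenvector first and sums the resulting scalar geometric progression.
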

\begin{proof}
     Let $v$ be an eigenvector of $G_{ab}A$ and $H_{ab}$. $\forall j \in \mathbb{N}$: 
    \begin{equation}
        H_{ab}^j G_{ab} A v = (1-f_{a}(\lambda)) f_a(\lambda)^j v.
    \end{equation}
    Recall the definition of $_{ab}M_m\iv A$ by~\eqref{eq:Mabinv}:
    \begin{align}\label{eq:m-step-eigenvalues}
        _{ab}M_{m}\iv A v &= (I + H_{ab} + H_{ab}^2 + \dots + H_{ab}^{m-1}) G_{ab} A v \notag \\
        &= (1-f_{a}(\lambda)) (1 + f_a(\lambda) + \dots + f_a(\lambda)^{m-1}) v \notag \\
        &= (1- f_a(\lambda)^m) v.
    \end{align}
    \eqref{eq:m-step-eigenvalues} follows from the sum of geometric progression. 
\end{proof}

\edit{\Cref{lem:Hab-GabA-eigenpairs,lem:eigenvalues-Mabinv*A} reveal that the eigenvalues of $H_{ab}$, $G_{ab} A$, and $_{ab}M_m\iv A$ are ``generated'' from the eigenvalues of $B_l\iv C_l$.
A natural question is on the spectrum range of $B_l\iv C_l$, which is answered by the following lemma. }

\begin{lemma}\label{lem:spectrum-of-BlinvCl}
    \edit{$\forall \lambda \in \sigma(B_l\iv C_l), \lambda \neq 0 \Rightarrow \lambda \in \mathbb{R}, \lambda \in (0,1)$. }
\end{lemma}
\vspace{-1em}
\begin{proof}
    \edit{$B_d$} is s.p.d, so \edit{$X = B_d^{-\frac{1}{2}}$} is well-defined and s.p.d.. \edit{$Y = B_d\iv A$} is similar to the matrix \edit{$Z = X A X$} because $X\iv \edit{Y} X = \edit{Z}$, so \edit{$Y, Z$} has the same sets of eigenvalues. \edit{$Z$} is congruent to $A$ so only has positive eigenvalues. Hence all eigenvalues of \edit{$Y$} are real and positive. 
    \edit{According to~\Cref{lem:Hab-GabA-eigenpairs}, if $\lambda \neq 0$ is an eigenvalue of $B_l\iv C_l$, then $1 - f_{a\pm}(\lambda) = 1 \mp \sqrt{\lambda}$ are the eigenvalues of $Y = G_{0,1}A$ with $a=0$. Finally, $1 \mp \sqrt{\lambda} \in \mathbb{R}_{>0} \Rightarrow \lambda \in \mathbb{R}, \lambda \in (0,1)$. } 
\end{proof}

\begin{remark}\label{rmk:big-m-good-preconditioner}
    \Cref{lem:eigenvalues-Mabinv*A} \edit{illustrates that when $a, b$ are fixed}, larger $m$ generates $_{ab}M_{m}\iv$ closer to $A\iv$. 
    \edit{According to~\Cref{lem:spectrum-of-BlinvCl}, $\forall \lambda \in \sigma(B_l\iv C_l), \lambda \in [0,1)$. 
    So,} $\forall a \in [0,1], f_a(\lambda) \in (0,1) \Rightarrow \lim_{m \rightarrow\infty} \edit{1-f_a(\lambda)^m = 1}$. Big $m$ pushes eigenvalues to one, so $_{ab}M_{m}\iv A$ tends to identity. 
    For fixed $m$, the smaller $f_a(\lambda)$, the \edit{faster $1-f_a(\lambda)^m$ goes to $1$}. 
\end{remark}

\subsection{Symmetric positive definiteness analysis}
\edit{In this subsection, we first discuss how $a$ influences the convergence of matrix splitting $A = B_{ab} - C_{ab}$ and the spectrum of $G_{ab}A$. From this we formally prove the necessary and sufficient conditions for $_{ab}M_m\iv$ being s.p.d.. We then present two cases appeared in the literature. We conclude with an optimal pair of $(a, b)$ in terms of clustered spectrum.}
\begin{lemma}\label{lem:a-b-negative-rhoH-spectrum-of-GabA}
\edit{Assume that $\lambda \in \sigma(B_l\iv C_l)$ may take arbitrary value within $[0,1)$. 
Let $\lambda_{max}$ denote $\max_{\lambda} \sigma(B_l\iv C_l)$. }
\begin{enumerate}[leftmargin=*]
    \item \edit{$\rho(H_{ab}) < 1$ if and only if $a \in [0,1]$}.
    \begin{enumerate}
        \item $\rho(H_{ab}) = \max(1-2a, f_{a+}(\lambda_{max}))$ if $a \in [0, \frac{1}{3}]$. 
        \item $\rho(H_{ab}) = \max(\frac{(1-a)^2}{4a}, f_{a+}(\lambda_{max}))$ if $a \in (\frac{1}{3}, 1]$. 
        \item If $a \in (-\infty, 0) \cup (1, \infty)$, there \edit{exists} some $\lambda \in \sigma(B_l\iv C_l)$ that lead to $\rho(H_{ab}) > 1$. 
    \end{enumerate}
    \item \edit{All eigenvalues of $G_{ab}A$ are positive, i.e., $\mathcal{I}_{G_{ab}A} \subset \mathbb{R}_{>0}$, if and only if $a \in [-1,1]. $}
    \begin{enumerate}
        \item If $a \in [-1, \frac{1}{3}]$, then $\mathcal{I}_{G_{ab}A} = (0, 2-2a)$.
        \item If $a \in (\frac{1}{3}, 1]$, then $\mathcal{I}_{G_{ab}A} = (0, 1+\frac{(1-a)^2}{4a})$. 
        \item If $a \in (-\infty, -1) \cup (1, \infty)$, there \edit{exists} some $\lambda \in \sigma(B_l\iv C_l)$ that leads to negative eigenvalue of $G_{ab}A$. 
        \end{enumerate}
\end{enumerate}
\end{lemma}
\begin{proof}
See Appendix~\ref{appendix:A-proof}. 
\end{proof}

\edit{As a result of~\Cref{lem:a-b-negative-rhoH-spectrum-of-GabA}, convergence of matrix splitting $A = B_{ab} - C_{ab}$ in~\eqref{eq:def-ab-multi-splitting} and positive definiteness of $G_{ab} A$ hold iff $a \in [0,1]. $}
A new set \edit{$\mathcal{C}_g \coloneq \{a,b \in \mathbb{R} \mid 2a+b = 1, a \geq 0, b\geq -1\}$ is introduced considering such restriction. } 

\edit{The following theorem proves that $\forall (a, b) \in \mathcal{C}_g, \forall m \in \mathbb{N}$, $_{ab}M_m$ or $_{ab}M_m\iv$ in~\eqref{eq:Mabinv} is s.p.d. and qualifies as a preconditioner for the conjugate gradient method. }

\begin{theorem}\label{thm:negative-ab-multi-splitting}
Consider $\forall (a, b) \in \mathcal{C}_g$, 
\begin{enumerate}
    \item $G_{ab}$ is s.p.d..
    \item The splitting~\eqref{eq:def-ab-multi-splitting} is convergent, i.e., $\rho(H_{ab}) < 1$. 
    \item $\forall m \in \mathbb{N}$, the matrix $_{ab}M_m\iv$ in~\eqref{eq:Mabinv} is s.p.d..
\end{enumerate}
\end{theorem}
\begin{proof}
    According to~\Cref{lem:a-b-negative-rhoH-spectrum-of-GabA}, $(a,b) \in \mathcal{C}_g \Rightarrow$ eigenvalues of $X = G_{ab}A$ are positive. $G_{ab} = X A\iv$. Multiplying $G_{ab}$ by $A^{\frac{1}{2}}$ on both sides  
    $\Rightarrow A^{\frac{1}{2}} G_{ab} A^{\frac{1}{2}} = A^{\frac{1}{2}} X A^{-\frac{1}{2}}$.
    The right matrix is similar to $X$ and hence has the same eigenvalues, while the left matrix is congruent to $G_{ab}$ and hence has the same number of positive eigenvalues. Thus $G_{ab}$ has the same number of positive eigenvalues as $X$ and so is p.d.. \edit{By construction, $G_{ab}$ is symmetric, so Point 1) is proven.
    Point 2) follows from~\Cref{lem:a-b-negative-rhoH-spectrum-of-GabA} directly. }
    \edit{According to~\Cref{lem:a-b-negative-rhoH-spectrum-of-GabA,lem:polynomial-preconditioner-spd}, $(a,b) \in \mathcal{C}_g \Rightarrow \rho(H_{ab}) < 1 \Rightarrow B_{ab} + C_{ab}$ is p.d.. For $m$ even, Point 3) follows from Points 1,4) of~\Cref{lem:polynomial-preconditioner-spd}.
    For $m$ odd, Point 3) follows from Points 1,3) of~\Cref{lem:polynomial-preconditioner-spd}.}
\end{proof}

\subsection{Example cases of parametric multi-splitting}
One optimal and two extreme cases of $(G_{ab}, H_{ab})$ are presented. Their sparsity patterns are visualized by~\Cref{fig:sparsity}. 

\subsubsection{Optimal case \texorpdfstring{$a=1, b=-1$}{a=1, b=-1}}\label{subsubsec:optimal}

$H_{1,-1} \edit{\coloneq H_{opt}}$ is block pentadiagonal with zero super and sub-diagonal blocks.
The block tridiagonal matrix $G_{1,-1} \edit{\coloneq G_{opt}}$ \edit{is called} ``symmetric stair preconditioner''~\cite{bu2024symmetric}, where \edit{$H_{opt}$ and $_{opt}M_m\iv$ were} not defined and positive definiteness of $\edit{G_{opt}}$ was not proven. 
\edit{Optimality in terms of spectrum clustering (eigenvalue multiplicity and spectrum range) is presented below. }

\begin{theorem}\label{thm:optimal-spectrum}
    If $a = 1, b = -1$, then $_{ab}M_m\iv$, $G_{ab}$, and $H_{ab}$ have the following optimal properties: 
    \begin{enumerate}
        \item $_{ab}M_m\iv A$ bears the least amount of distinct eigenvalues. 
        \item The interval $\mathcal{I}_{_{ab}M_m\iv A}$ has the smallest length.
    \end{enumerate}
\end{theorem}
\vspace{-0.8em}
\begin{proof}
    Recall it was proven in~\Cref{lem:eigenvalues-Mabinv*A} that if $0 \neq \lambda \in \sigma(B_l\iv C_l)$, then it ``generates'' a pair of eigenvalues of $_{ab}M_m\iv A$ at $1 - f_{a\pm}(\lambda)^m$. 
    A special case is when $a = 1 \Rightarrow f_{a+} = f_{a-} = \mathds{1}$ leading to two identical eigenvalues. 
    The sparsity pattern of $B_l\iv C_l$ determines that it has $\ceil*{\frac{N}{2}}n$ eigenvalues at $0$ and $\floor*{\frac{N}{2}}n$ non-zero eigenvalues~\cite{bu2024symmetric}. 
    For $N = 2k$, $kn$ non-zero $\lambda \in \sigma(B_l\iv C_l)$ ``generate'' $2kn$ eigenvalues of $_{ab}M_m\iv A$, same as the matrix dimension. 
    For $N=2k-1$, $(k-1)n$ non-zero $\lambda$ ``generate'' $(2k-2)n$ eigenvalues. The rest $n$ eigenvalues at $1$ are mapped from $0 \in \sigma(B_l\iv C_l)$. 
    \Cref{tab:N-even-GH-a=1-b=-1,tab:N-odd-GH-a=1-b=-1} summarize the spectrum of matrices arising from $(G_{1,-1}, H_{1,-1})$ for even and odd $N$. 
    $\forall m \in \mathbb{N}, {}_{ab}M_m\iv A$ has $d_{Nna}$ distinct eigenvalues\footnote{\label{ft:inspection}\edit{By inspection, $\forall m \in \mathbb{N}, {}_{1,-1}M_{m}\iv = {}_{0,1}M_{2m}\iv$. So the first case of~\eqref{eq:distinct-eigenvalues-count} also holds for $a=0$ and $m$ is even. This fact is footnoted for conciseness. }}, where
    \begin{equation}\label{eq:distinct-eigenvalues-count}
    d_{Nna} \coloneq
        \begin{cases}
           \frac{N}{2}n \hspace{0.5em} \text{ or } \hspace{0.5em}
           \floor*{\frac{N}{2}}n + 1,
           & \text{ if $a=1$.} \\
           Nn \hspace{0.5em} \text{ or } \hspace{0.5em}
           2\floor*{\frac{N}{2}}n + 1, 
           & \text{ if $a \in [0,1)$.}
        \end{cases}
    \end{equation}
    According to~\Cref{lem:eigenvalues-Mabinv*A}, the eigenvalues of $_{ab}M_{m}\iv A$ are $1 - f_a(\lambda)^m$. 
    As detailed in~\Cref{appendix:A-proof}, $f_{a+} \in (0,1)$ but the range of $f_{a-}$ depends on $a$. 
    $\max_{\lambda} f_a^m = f_{a+}^m(\lambda_{max})$, which achieves its minimum $\lambda_{max}^m$ at $a=1$ because $\sqrt{\lambda} > \lambda, \forall \lambda \in (0,1)$, so the smallest eigenvalue of $_{1,-1}M_m\iv A$ is the biggest among $\mathcal{C}_g$. 
    If $m$ is even, then $\min_{\lambda} f_a^m = 0, \forall a \in [0,1]$. If $m$ is odd, then $\min_{\lambda} f_a^m = \min_{\lambda} f_{a-}^m$, which is negative if $a \in [0, 1)$ and zero if $a=0$. 
    So the biggest eigenvalue of $_{1,-1}M_m\iv A$ is the smallest among $\mathcal{C}_g$. 
\end{proof}
\subsubsection{Extreme case \texorpdfstring{$a = 0, b = 1$}{a=0, b=1}}\label{subsubsec:block-jacobi}
The block diagonal matrix $G_{0, 1}$ is the block Jacobi preconditioner~\cite{saad2003iterative}. 
\edit{$H_{0,1}$ and the subsequent ${}_{0,1}M_2\iv$ were used in~\cite{dubois1979approximating}. 
Compared with $G_{0,1} (m=1)$, nearly 50\% reduction in PCG iteration counts was reported but not explained. 
Such phenomenon can now be addressed by~\cref{ft:inspection}: ${}_{opt}M_1\iv = {}_{0,1}M_2\iv$, so the number of distinct eigenvalues of ${}_{0,1}M_2\iv A$ is halved. 
}

\subsubsection{Extreme case \texorpdfstring{$a = \frac{1}{2}, b = 0$}{a=0.5, b=0}}\label{subsubsec:symmetric-stair}
The block tridiagonal matrix $G_{\nicefrac{1}{2},0}$ \edit{is called} ``additive stair preconditioner'' in~\cite{bu2024symmetric}. 

\begin{table}[!tb]
\centering
\caption{Spectrum of matrices related to $(G_{\textcolor{black}{opt}}, H_{\textcolor{black}{opt}}) \in \mathbb{R}^{Nn \times Nn}$}
\vspace{-0.5em}
\begin{subtable}{\linewidth}
\caption{$N = 2k$ \edit{(even)} }
\vspace{-0.5em}
\scalebox{0.92}{
\begin{tabular}{ |c|c|c|c|c|c| } 
 \hline
 \multirow{2}{*}{Matrix} & \multirow{2}{*}{\makecell{\# Exact \\ Zeros}}& \multicolumn{3}{c|}{Eigenvalues in $(0, 1)$} & \multirow{2}{*}{\makecell{\# Exact \\ Ones}}  \\ 
 \cline{3-5}
 & & \# Distinct & Pair? & Example &  \\
 \hline
 $B_l\iv C_l$ & $k n$ & $k n$ & \xmark & $\lambda$ & 0 \\ 
 \hline
 $H_{\textcolor{black}{opt}}$ & 0 & $k n$ & \cmark & $\lambda$ & 0  \\ 
 \hline
 $G_{\textcolor{black}{opt}} A$ & 0 & $k n$ & \cmark & $1 - \lambda$ & 0 \\
 \hline
 $H_{\textcolor{black}{opt}} G_{\textcolor{black}{opt}} A$ & 0 & $k n$ & \cmark & $(1 - \lambda)\lambda$ & 0  \\
 \hline
 $(I + H_{\textcolor{black}{opt}}) G_{\textcolor{black}{opt}} A$ & 0 & $k n$ & \cmark & $1 - \lambda^2$ & 0  \\
 \hline
 $_{\textcolor{black}{opt}}M_{m}\iv A$ & 0 & $k n$ & \cmark & $1 - \lambda^m$ & 0  \\
 \hline
\end{tabular}\label{tab:N-even-GH-a=1-b=-1}
}
\end{subtable}
\begin{subtable}{\linewidth}
\vspace{0.5em}
\caption{$N = 2k-1$ \edit{(odd)} }
\vspace{-0.5em}
\scalebox{0.92}{
\begin{tabular}{ |c|c|c|c|c|c| } 
 \hline
 \multirow{2}{*}{Matrix} & \multirow{2}{*}{\makecell{\# Exact \\ Zeros}} & \multicolumn{3}{c|}{Eigenvalues in $(0, 1)$} & \multirow{2}{*}{\makecell{\# Exact \\ Ones}} \\ 
 \cline{3-5}
 & & \# Distinct & Pair? & Example & \\
 \hline
 $B_l\iv C_l$ & $k n$ & $(k-1) n$ & \xmark & $\lambda$ & 0 \\ 
 \hline
 $H_{\textcolor{black}{opt}}$ & $n$ & $(k-1) n$ & \cmark & $\lambda$ & 0 \\ 
 \hline
 $G_{\textcolor{black}{opt}} A$ & 0 & $(k-1) n$ & \cmark & $1 - \lambda$ & $n$  \\
 \hline
 $H_{\textcolor{black}{opt}} G_{\textcolor{black}{opt}} A$ & $n$ & $(k-1) n$ & \cmark & $(1 - \lambda)\lambda$ & 0  \\
 \hline
 $(I + H_{\textcolor{black}{opt}}) G_{\textcolor{black}{opt}} A$ & 0 & $(k-1) n$ & \cmark & $1 - \lambda^2$ & $n$  \\
 \hline
 $_{\textcolor{black}{opt}}M_{m}\iv A$ & 0 & $(k-1) n$ & \cmark & $1 - \lambda^m$ & $n$  \\
 \hline
\end{tabular}\label{tab:N-odd-GH-a=1-b=-1}
}
\end{subtable}
\vspace{-1.5em}
\end{table}

\subsection{Parametric polynomial preconditioner}
In this section we build on our rigorous analysis of the spectrum of $_{ab}M_m\iv A$ and positive definiteness of $_{ab}M_m\iv$, and introduce a common practice to further accelerate convergence of PCG. We do so through the polynomial parametrization with $\{\alpha_i\}$ where each $\alpha_i \in \mathbb{R}$ can be selected to minimize the condition number via Chebyshev iteration~\cite{johnson1983polynomial}: 
\begin{equation}\label{eq:Mminv-polynomial-coefficients}
    {}^{\alpha}_{ab}M_m\iv = (I + \alpha_1 H_{ab} + \dots + \alpha_{m-1}H_{ab}^{m-1}) G_{ab}.
\end{equation} 
Unfortunately, while such an approach is known to improve overall PCG performance, the best choice of $\{\alpha_i\}$, i.e. the one which minimizes the total number of PCG iterations, \textit{is often NOT the one that minimizes the condition number and is impossible to compute}~\cite{saad1985practical}. 
Fortunately, our previous analysis still guarantees the correctness of the following theorem, regardless of the choice of $\{\alpha_i\}$, enabling its use in practice. 
\begin{theorem}\label{thm:MoptinvA-distinct-eigenvalues}
    If $a = 1, b = -1$, then ${}^{\alpha}_{ab}M_m\iv A$ has $\frac{N}{2}n$ or $\floor*{\frac{N}{2}}n + 1$ distinct eigenvalues, depending on the parity of $N$. 
\end{theorem}
\begin{proof}
    Follows from~\Cref{thm:optimal-spectrum,lem:eigenvalues-Mabinv*A}. 
\end{proof}

\section{Time complexity analysis}
In this section, we analyze the theoretical time usage of solving $Ax = b$ directly or iteratively, where $A$ is symmetric positive definite block tridiagonal as defined in~\eqref{eq:def-spd-block-tri}. 

\subsection{Complexity of preconditioner computation}
We present the closed-form formula for $G_{opt}$ when $N=3$ and $H_{opt}$ when $N=5$. 
The cases for $a\in [0,1)$ are omitted due to space limit but can be deducted similarly.  
\begin{align}\label{eq:GH-opt-closed-form}
    & \begin{aligned}
        E_i &\coloneq D_i\iv O_i D_{i+1}\iv, \\
        i &= 1, 2, \dots, N-1
    \end{aligned} \quad
    G_{opt} = \begin{bsmallmatrix}
    D_1\iv & -E_1 & \\
    -E_1\tp & D_2\iv & -E_2 \\
    & -E_2\tp & D_3\iv
    \end{bsmallmatrix},
    \\
    H_{opt} &= \begin{bsmallmatrix}
    E_1 O_1\tp & 0 & E_1 O_2 & 0 & 0 \\
    0 & E_1\tp O_1 + E_2 O_2\tp & 0 & E_2 O_3 & 0\\
    E_2\tp O_1\tp & 0 & E_2\tp O_2 + E_3 O_3\tp & 0 & E_3 O_4 \\
    0 & E_3\tp O_2\tp & 0 & E_3\tp O_3 + E_4 O_4\tp & 0 \\
    0 & 0 & E_4\tp O_3\tp & 0 & E_4\tp O_4
    \end{bsmallmatrix}. \notag
\end{align}
\eqref{eq:GH-opt-closed-form} requires $\mathcal{O}(Nn^3)$ floating point operations due to the inevitable of dense matrix inversion $D_i\iv$. 

\begin{figure}[!t]
    \centering
    \vspace{-5pt}
    \includegraphics[width=0.8\linewidth]{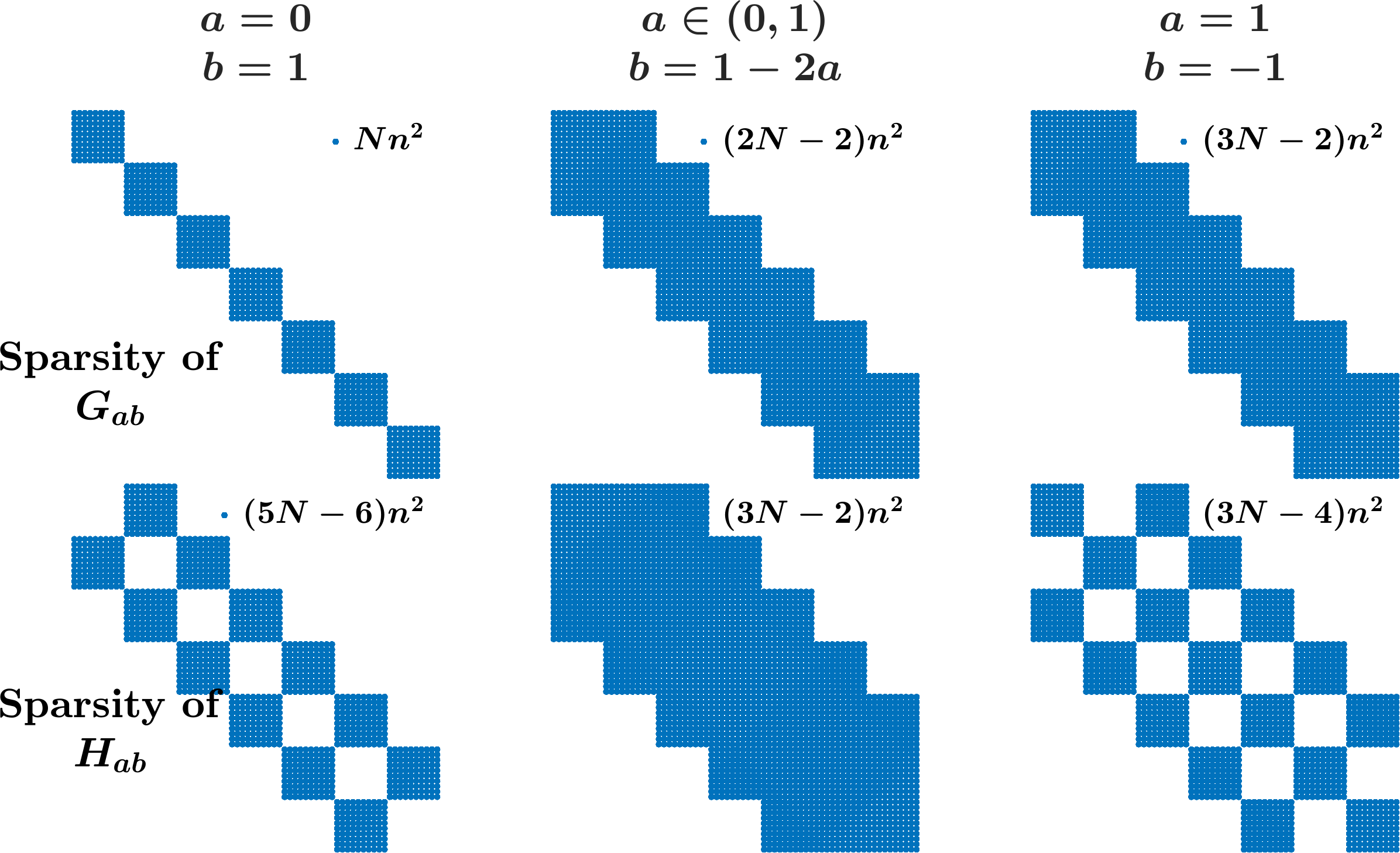}
    \caption{
    \edit{Sparsity pattern of $G_{ab}$ and $H_{ab}$ for different $(a,b)$ with $N = 7, n = 10$. 
    The counts of nonzero entries of matrices are labeled on the top-right. 
    $G_{ab}$ is block tridiagonal with the exception at $a=0$ (block Jacobi). 
    $H_{ab}$ is block pentadiagonal with the exceptions at $a=0$ and $a=1$ (optimal). }
    }
    \label{fig:sparsity}
\vspace{-1.2em}
\end{figure}

\subsection{Complexity of single iteration of PCG}
The computation of $_{ab}M_m\iv r$ and $Ap$ dominate the complexity of each iteration of PCG, where $r$ denotes the residual vector and $p$ denotes the directional vector. 
However, explicit computation of $_{ab}M_m\iv$ destroys sparsity, so it is preferable to store $(G_{ab}, H_{ab})$ instead. 
$_{ab}M_m\iv r$ can then be computed via $y_0 = G_{ab} r$, $y_i = H_{ab} y_{i-1}, i=1, \dots, m-1$. 
Depending on the choice of $a$ and polynomial order $m$, the total floating point operations to perform block-wise (band) matrix-vector multiplications ($_{ab}M_m\iv r, Ap$) is $\mathcal{O}(N n^2 g_a(m))$ where $g_a(m)$ is determined by the sparsity patterns in~\Cref{fig:sparsity}.
\begin{equation}
    g_a(m) \coloneq \begin{cases}
        2m + 2, & \text{ if $a=0$.} \\
        5m + 1, & \text{ if $a \in (0,1)$.} \\
        3m + 3, & \text{ if $a=1$.}
    \end{cases}
\end{equation}

\subsection{Complexity comparison: direct v.s. iterative}

The time complexity of directly solving~\eqref{eq:def-spd-block-tri} via Cholesky decomposition is $\mathcal{O}(N n^3)$ due to the sequential operations w.r.t. $N$, regardless of hardware usage. Specialized linear algebra kernels can reduce the $n^3$ constant but the cubicity persists. The forward/backward substitutions are neglected because they contribute $n^2$ terms. 

In contrast, since block-wise matrix-vector multiplication and preconditioner computation (e.g.~\eqref{eq:GH-opt-closed-form}) are both parallelizable, the time complexity of PCG with $_{ab}M_m\iv$ on a GPU is 
$\mathcal{O}\Big( n_{itr} \frac{N}{n_{blk}} \frac{n^2}{n_{thr}} g_a(m) + \frac{N}{n_{blk}} n^3 \Big)$, 
where $n_{itr}$ denotes the number of PCG iterations, which is bounded by $d_{Nna}$ in~\eqref{eq:distinct-eigenvalues-count} under exact arithmetic~\cite{nocedal1999numerical}. 

$n_{blk}$ and $n_{thr}$ represent the two basic parallelism hierarchies on a GPU: the number of blocks and the number of threads per block. 
Each block of threads can efficiently, in parallel, compute the product of one block row of banded matrix-vector multiplication and each $D_i\iv, E_i, E_i O_{i+1}$ in~\eqref{eq:GH-opt-closed-form}. 
Data dependencies in matrix inversion limit parallelism in those primitive operations. 
It is thus theoretically motivated and practically feasible\footnote{\edit{The latest mobile NVIDIA Jetson AGX Orin has $16$ Streaming Multiprocessor which each supports several tens of blocks of hundreds of threads. This provides more than sufficient parallelism for OCP on robotic applications.}}
to conclude: 
\begin{align}\label{eq:complexity-comparison}
    \text{If } n_{thr} = \mathcal{O}(n) \text{ and } n_{blk} &= \mathcal{O}(N) \text{, then } \\
    \mathcal{O}\Big( n_{itr} \frac{N}{n_{blk}} \frac{n^2}{n_{thr}} g_a(m) + \frac{N}{n_{blk}} n^3 \Big) &< \mathcal{O}(N n^3) \text{ for } N \gg 1. \notag
\end{align}


\color{black}
\section{Numerical results}
In this section, we present numerical results on the $m$-step polynomial preconditioner based on the proposed family of multi-splittings.
We construct random s.p.d. block tridiagonal matrices by viewing the classical LQR problem as a Quadratic Program (QP), formulating \edit{its} KKT system, and computing the Schur complement w.r.t. the Hessian as done in current \edit{GPU-accelerated} parallel solvers~\cite{adabag2024mpcgpu}. 
We \edit{then} evaluate the \edit{minimum, maximum, and number of distinct eigenvalues, and the relative condition number of the preconditioned system. 
We also collect counts of PCG iteration $n_{itr}$ and block-wise matrix-vector multiplication (\texttt{gemv}) over all $n_{itr} \cdot g_a(m)$ in MATLAB (exit condition: $\norm{Ax-b}_2 < 1\mathrm{e}{-6}$). The \texttt{gemv} count is proportional to wall clock time.}

\Cref{fig:x=polyOrderm} shows \edit{all results} for varying polynomial orders \edit{$m \in \{1,2,3,4\}$} and parameters $a, b$. 
\edit{``Block Jacobi'' points to~\Cref{subsubsec:block-jacobi}, ``Equal'' points to equal weights of diagonal and stair splittings, ``Stairs only'' points to~\Cref{subsubsec:symmetric-stair}, ``Optimal'' points to~\Cref{subsubsec:optimal}, and ``PolyCoeff'' uses $(G_{opt}, H_{opt})$ with polynomial coefficients\footnote{\edit{We set $\alpha_{m-1}=7, \alpha_i = 1, \forall i\neq m-1$ for empirical performance using a grid search. No $\alpha$ is involved if $m=1$, so it is the same as ``Optimal''. }} as in~\eqref{eq:Mminv-polynomial-coefficients}. 
For each $(a,b,m)$ triple, $100$ matrices $A$ are randomly generated. For each $A$, $Ax = b$ is solved by PCG with $100$ randomly generated vectors $b$.}
The condition numbers are normalized to the ``\edit{Block Jacobi}'' + ``$m=1$'' to enable \edit{comparisons across settings which substantially differ in absolute value.} 

\edit{Ignoring ``PolyCoeff'' temporarily, the first three subplots of~\Cref{fig:x=polyOrderm} validate the claims of~\Cref{thm:optimal-spectrum}: $_{opt}M_m\iv A$ has the most clustered spectrum: least distinct eigenvalues and narrowest spectrum range. 
The \nth{4} and \nth{5} subplots prove that}
for all pairs of $a, b$, larger $m$ leads to smaller PCG iteration counts and condition number, as predicted in~\Cref{rmk:big-m-good-preconditioner}. 
\edit{In all subplots, ``Optimal''+``$m=1,2$'' are the same as ``Block Jacobi''+``$m=2,4$'', as predicted in~\cref{ft:inspection}.}

\edit{The \nth{6} subplot delivers the key message: ``Optimal'' is faster than ``Block Jacobi'' when $m$ is odd but slower when $m$ is even. With the help of~\eqref{eq:Mminv-polynomial-coefficients}, ``PolyCoeff'' then requires the least amount of block \texttt{gemv} operations for all $m$ and achieves minimum at $m=2$. 
Its success originates from: 
1) The eigenvalues of ${}^{\alpha}_{opt}M_m\iv A$ always come in pairs, as proven by~\Cref{thm:MoptinvA-distinct-eigenvalues}, a fact that does not generalize to other $(a, b) \in \mathcal{C}_g$, especially the competing case $a=0$; and
2) the polynomial parametrization reduces the condition number (\nth{4} subplot) at the cost of slightly wider spectrum range (\nth{1} and \nth{2} subplots). Both points reduce the upper bound of the number of PCG iterations. 
At $m=2$, the only difference between ``Block Jacobi'' and ``Optimal/PolyCoeff'' is the \nth{2} line of~\eqref{eq:GH-opt-closed-form}. The computation time of $H_{opt}$ is $\mathcal{O}(\frac{N}{n_{blk}} \frac{n^3}{n_{thr}} )$. 
For large $N$ and $n$, such time can be compensated by the difference between the blue and green bars in \nth{6} subplot. 
}
\begin{figure}[!t]
    \centering
    \includegraphics[width=0.99\linewidth]{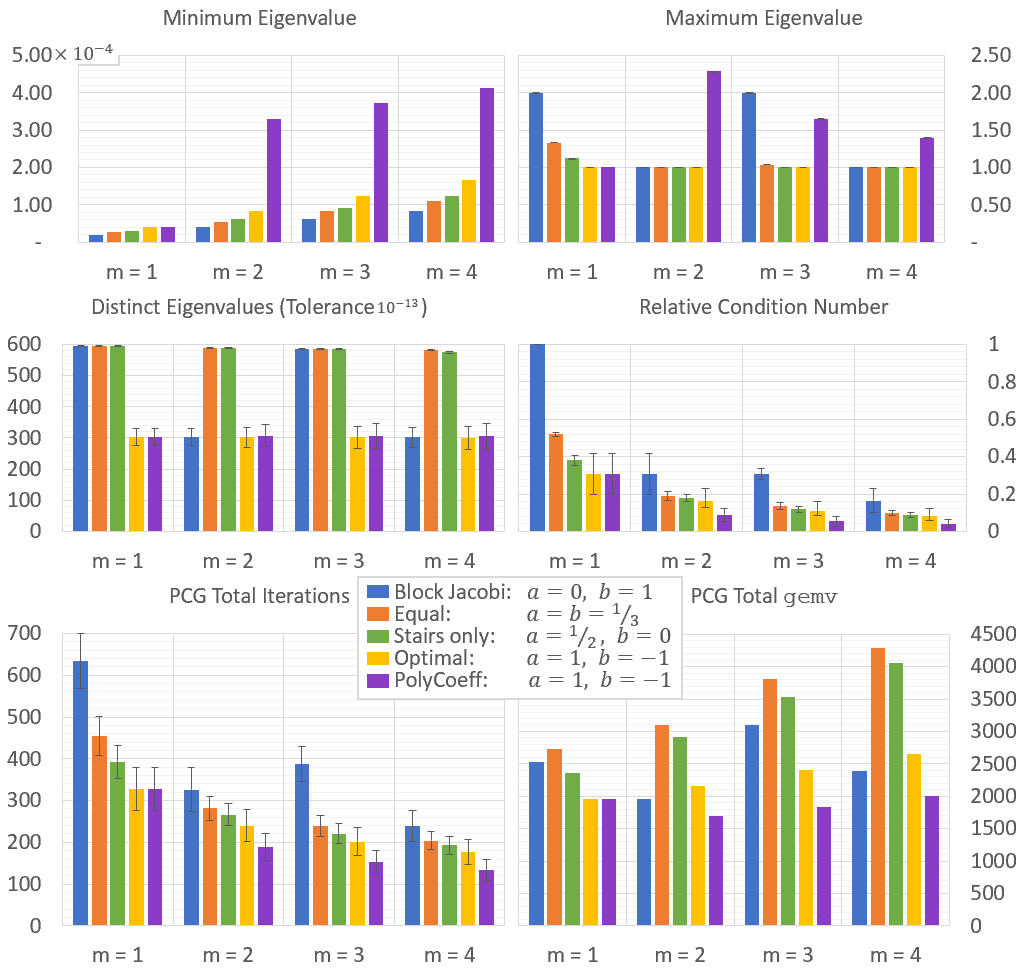}
    \vspace{-15pt}
    \caption{\edit{Statistics for different $(a,b)$ with randomly generated s.p.d. block tridiagonal matrices $N = 30, n = 20$.}}
    \label{fig:x=polyOrderm}
\vspace{-15pt}
\end{figure}

\section{Conclusion}
\edit{We develop a parametric} family of $m$-step polynomial preconditioners tailored for symmetric positive definite block tridiagonal matrices. Building on the findings of~\cite{bu2024symmetric}, we extend the base case ($m=1$) to a general case of $m \in \mathbb{N}$, while preserving parallel efficiency. 
We \edit{provide} necessary and sufficient conditions \edit{for positive definiteness of the preconditioner, qualifying its usage for PCG, and}
demonstrate a unique set of optimal parameters \edit{and polynomial coefficients that achieve} the most clustered spectrum, \edit{the fewest distinct eigenvalues, and the best resulting PCG performance.
}

In future work, we aim to further reduce PCG iterations by parametrizing\edit{~\eqref{eq:Mminv-polynomial-coefficients} properly rather than grid search. 
We will conduct numerical comparison of PCG against direct factorization to validate the theoretical claim~\eqref{eq:complexity-comparison}. 
We also aim to leverage the proposed preconditioner to both improve upon existing parallel (S)QP solvers for OCPs~\cite{adabag2024mpcgpu} 
and for applications in other scientific domains (e.g. solving PDE).} 

\bibliographystyle{IEEEtran}
\bibliography{IEEEabrv, main.bib}

\appendix
\subsection{Proof of \texorpdfstring{\Cref{lem:a-b-negative-rhoH-spectrum-of-GabA}}{Theorem}}\label{appendix:A-proof}

\begin{table}[!t]
    \centering
    \caption{
    \edit{Summary of the proof of~\Cref{lem:a-b-negative-rhoH-spectrum-of-GabA} if $a \in [0,1]$}
    }
    \vspace{-0.5em}
\begin{subtable}{\linewidth}
    \caption{Extreme points of $f_{a-}(\lambda)$ for $\lambda \in (0,1)$}
    \vspace{-0.5em}
    \scalebox{0.78}{
    \begin{tabular}{|c|c|c|c|c|c|c|}
        \hline
        \multirow{2}{*}{$a$}& \multicolumn{3}{c|}{MAX} & \multicolumn{3}{c|}{MIN} \\
        \cline{2-7} 
        & arg & reachable? & value & arg & reachable? & value \\
        \hline 
        $[0,\frac{1}{3}]$ & $0$ & $\xmark$ & $0$ & $1$ & $\xmark$ & $2a-1 \in [-1, -\frac{1}{3})$  \\
        \hline
        $(\frac{1}{3}, \frac{1}{2}]$ & $0$ & $\xmark$ & $0$ & $\lambda^*$ & $\cmark$ & $\frac{(1-a)^2}{-4a} \in (-\frac{1}{3}, -\frac{1}{8}]$ \\
        \hline
        $(\frac{1}{2}, 1]$ & $1$ & $\xmark$ & $2a-1 \in (0,1]$ & $\lambda^*$ & $\cmark$ & $\frac{(1-a)^2}{-4a} \in (-\frac{1}{8}, 0]$ \\
        \hline
    \end{tabular}
    }
    \label{tab:extreme-points}
\end{subtable}
\begin{subtable}{\linewidth}
    \vspace{0.5em}
    \caption{Range of $f_{a-}(\lambda)$, $1-f_{a-}(\lambda)$, and spectrum of $H_{ab}, G_{ab}A$}
    \vspace{-0.5em}
    \scalebox{0.81}{
    \begin{tabular}{|c|c|c|c|c|}
        \hline
        $a$ & range of $f_{a-}$ & $\mathcal{I}_{H_{ab}}$ & range of $1-f_{a-}$ & $\mathcal{I}_{G_{ab}A}$ \\
        \hline 
        $[0,\frac{1}{3}]$ & $(2a-1, 0)$ & $(2a-1, 1)$ & $(1, 2-2a)$ & $(0, 2-2a)$ \\
        \hline 
        $(\frac{1}{3}, \frac{1}{2}]$ & $[\frac{(1-a)^2}{-4a}, 0)$ & $[\frac{(1-a)^2}{-4a}, 1)$ & $(1, 1+\frac{(1-a)^2}{4a}]$ & $(0, 1+\frac{(1-a)^2}{4a}]$ \\
        \hline 
        $(\frac{1}{2}, 1]$ & $[\frac{(1-a)^2}{-4a}, 2a-1)$ & $[\frac{(1-a)^2}{-4a}, 1)$ & $(2-2a, 1+\frac{(1-a)^2}{4a}]$ & $(0, 1+\frac{(1-a)^2}{4a}]$ \\
        \hline
    \end{tabular}
    }
    \label{tab:range-of-spectrum}
\end{subtable}
\vspace{-2em}
\end{table}

\begin{proof}
\edit{Recall $f_{a+}(\lambda) = a \lambda + (1-a)\sqrt{\lambda}$ and $f_{a-}(\lambda) = a \lambda + (a-1)\sqrt{\lambda}$}. 
\begin{enumerate}[leftmargin=*]
\item If $a \in [0, 1]$, then $f_{a+}$ is monotonically increasing. 
$f_{a+}(0)=0, f_{a+}(1)=1 \Rightarrow f_{a+} \in (0, 1)$. 
\edit{So is $1-f_{a+}$. }
\begin{itemize}
    \item The extreme points of $f_{a-}$ are summarized in~\Cref{tab:extreme-points}. 
    \begin{itemize}
        \item If $a = 0$, \edit{then} $f_{a-} = -\sqrt{\lambda} \in (-1, 0)$ so it is monotonically decreasing. 
        \item  If $a \in (0, 1]$, \edit{then $f_{a-}' = 0$ at $\lambda^* = \frac{(1-a)^2}{4a^2} \geq 0$.} 
        \edit{Its extreme point has three candidates}: $f_{a-}(0) = 0$, $f_{a-}(1) = 2a-1$, and $f_{a-}(\lambda^*) = -\frac{(1-a)^2}{4a}$.
        \item If $\lambda^* \in (0,1)$, \edit{then} $ \frac{(1-a)^2}{4a^2} < 1 \Rightarrow a > \frac{1}{3}$.
        $f_{a-}$ decreases on $(0, \lambda^*)$ and increases on $(\lambda^*, 1)$. So $f_{a-}$ achieves its maximum at $\lambda = 0$ or $\lambda = 1$ (depends on whether $a \geq \frac{1}{2}$) and minimum at $\lambda^*$. 
        \item $a \leq \frac{1}{3} \Rightarrow \lambda^* \geq 1$, $f_{a-}$ decreases on $(0, 1)$, so the maximum is at $\lambda = 0$ and minimum at $\lambda = 1$.
    \end{itemize}
    \item $\rho(H_{ab}) < 1 \Leftrightarrow \mathcal{I}_{H_{ab}} \subset (-1, 1)$ is concluded from the third column of~\Cref{tab:range-of-spectrum}. $\mathcal{I}_{G_{ab}A} \subset \mathbb{R}_{>0}$ is concluded from the fifth column. 
\end{itemize}
\item If $a \in (1, +\infty)$, \edit{then} $f_{a-}$ is monotonically increasing because \edit{$a, a-1 > 0$}. 
$\frac{1}{a} \in (0,1)$ holds. $f_{a-}(\frac{1}{a}) = 1 + (a-1)\sqrt{\frac{1}{a}} > 1$, so $\forall \lambda \in [\frac{1}{a}, 1)$ leads to $\rho(H_{ab}) > 1$ and negative eigenvalues of $G_{ab}A$. 
\item If $a \in (-\infty, 0)$, \edit{then} $f_{a-} < 0$ is monotonically decreasing because \edit{$a, a-1 < 0$}. $\frac{1}{(1-a)^2} \in (0,1)$ holds. $f_{a-}(\frac{1}{(1-a)^2}) = \frac{a}{(1-a)^2} -1 < -1$, so $\forall \lambda \in [\frac{1}{(1-a)^2}, 1) $ leads to $\rho(H_{ab}) > 1$. \\
$f_{a-}(\lambda) < 0 \Rightarrow 1- f_{a-}(\lambda) > 0$, so the signs of eigenvalues of $G_{ab}A$ depend on $f_{a+}$. 
\edit{$f_{a+}' = 0$ at $\lambda^* = \frac{(1-a)^2}{4a^2} > 0$. }
\begin{itemize}
    \item If $\lambda^* \in (0,1)$, then $\frac{(1-a)^2}{4a^2} < 1 \Rightarrow a < -1$, 
    $f_{a+}$ increases on $(0, \lambda^*)$, decreases on $(\lambda^*, 1)$, and achieves its maximum at $f_{a+}(\lambda^*) = -\frac{(1-a)^2}{4a} > 1 \Rightarrow$ $G_{ab}A$ has negative eigenvalue. 
    \item If $a \geq -1$, \edit{then} $\lambda^* \geq 1$, $f_{a+}$ is monotonically increasing on $(0,1)$. So $f_{a+}(\lambda)$ and $1-f_{a+}(\lambda) \in (0, 1)$. \edit{Meanwhile}, $f_{a-} \in (2a-1, 0)$ and $1-f_{a-} \in (1, 2-2a)$ coincide with the first rows of~\Cref{tab:extreme-points,tab:range-of-spectrum}. 
\end{itemize}
\end{enumerate}
All points are proven. 
\end{proof}

\end{document}